\documentclass[10pt]{article}

\usepackage{amsmath,amssymb,amsthm,mathtools}
\usepackage{geometry}
\usepackage{enumitem}
\usepackage{hyperref}
\usepackage{mathrsfs}
\usepackage{authblk}
\newtheorem{theorem}{Theorem}[section]
\newtheorem{lemma}[theorem]{Lemma}
\newtheorem{proposition}[theorem]{Proposition}
\newtheorem{conjecture}[theorem]{Conjecture}
\newtheorem{remark}[theorem]{Remark}

\DeclareMathOperator{\rank}{rank}
\DeclareMathOperator{\Gl}{GL}

\title{A \(3\times 3\) counterexample to Lin and Wimmer's rank-minimization conjecture associated with Roth's similarity theorem}
\author[1]{Shuo Shi}
\author[1]{Juan Zhang}
\author[2]{Yun Zhang\thanks{Corresponding author. E-mail: zhangyunmaths@163.com}}
\affil[1]{School of Mathematics and Computational Science, Hunan Key Laboratory for Computation and Simulation in Science and Engineering, Key Laboratory of Intelligent Computing and Information Processing of Ministry of Education,
Xiangtan University, Xiangtan, Hunan,  411105, China}
\affil[2]{School of Mathematics and Statistics, Huaibei Normal University, Huaibei, 235000, China}
\date{}

\begin{document}

\maketitle

\begin{abstract}
We give a \(3\times 3\) counterexample, valid over every field, to a rank-minimization conjecture of Lin and Wimmer (Bull. Aust. Math. Soc., 84 (3) (2011), 441--443) related to Roth's similarity theorem for the Sylvester matrix equation.
We also prove that, over the complex field, no counterexample can occur when one of the two matrix sizes is less than \(3\).
Hence the example is dimensionally minimal over the complex field.
~\\
\noindent \textbf{Keywords:} Sylvester matrix equation, Roth's similarity theorem, rank minimization.\\
\noindent \textbf{2020 MSC:} 15A24, 15A03.
\end{abstract}

\section{Introduction}

Let \(K\) be a field.
For a positive integer \(r\), write
\[
\Gl(r,K)=\{M\in K^{r\times r}:\det M\neq 0\}.
\]
When the ground field is clear, we simply write \(\Gl(r)\).

Let
\[
A\in K^{m\times m},~
B\in K^{n\times n},~
C\in K^{m\times n}.
\]
For the solvability of the Sylvester equation
\begin{equation}\label{eq:Sylvester}
AX-XB=C, 
\end{equation}
we have the following results.

\begin{theorem}\label{thm:Roth}{\rm \cite[Roth's similarity theorem ]{Roth1952,zhan2013}}
Consider the Sylvester matrix equation \eqref{eq:Sylvester}. Then the equation \eqref{eq:Sylvester} is solvable with \(X\in K^{m\times n}\) if and only if there exists \(P\in\Gl(m+n,K)\) such that
\[
P
\begin{pmatrix}
A&C\\
0&B
\end{pmatrix}
=
\begin{pmatrix}
A&0\\
0&B
\end{pmatrix}
P.
\]
Moreover, over the complex field \(\mathbb C\), the equation \eqref{eq:Sylvester} has a unique solution for every \(C\) if and only if \(A\) and \(B\) have no common eigenvalue.
\end{theorem}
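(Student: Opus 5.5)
The easy direction is a direct computation. Suppose \(X\) solves \eqref{eq:Sylvester} and set
\[
P=\begin{pmatrix} I_m & X\\ 0 & I_n\end{pmatrix}\in\Gl(m+n,K).
\]
Multiplying out gives
\[
P\begin{pmatrix}A&C\\0&B\end{pmatrix}=\begin{pmatrix}A&C+XB\\0&B\end{pmatrix},\qquad
\begin{pmatrix}A&0\\0&B\end{pmatrix}P=\begin{pmatrix}A&AX\\0&B\end{pmatrix}.
\]
These agree exactly when \(AX-XB=C\).

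For the converse I would use the dimension-counting argument of Flanders and Wimmer, which works over any field \(K\). Set
\[
M_C=\begin{pmatrix}A&C\\0&B\end{pmatrix},\qquad M_0=\begin{pmatrix}A&0\\0&B\end{pmatrix}.
\]
Consider the two linear maps on \(K^{(m+n)\times(m+n)}\) given by \(\varphi_C(Y)=M_CY-YM_0\) and \(\varphi_0(Y)=M_0Y-YM_0\). If \(M_C\) and \(M_0\) are similar, with \(M_C=P^{-1}M_0P\), then \(Y\mapsto P^{-1}Y\) is a linear isomorphism from \(\ker\varphi_0\) onto \(\ker\varphi_C\). Hence \(\dim\ker\varphi_C=\dim\ker\varphi_0\).

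Write \(Y=\begin{pmatrix}Y_{11}&Y_{12}\\Y_{21}&Y_{22}\end{pmatrix}\) in block form. The condition \(Y\in\ker\varphi_C\) splits into four equations:
\[
AY_{11}+CY_{21}=Y_{11}A,\quad AY_{12}+CY_{22}=Y_{12}B,\quad BY_{21}=Y_{21}A,\quad BY_{22}=Y_{22}B.
\]
Let \(W_C\) be the space of pairs \((Y_{21},Y_{22})\) for which there exist \(Y_{11},Y_{12}\) satisfying these equations, and let \(W_0\) be the analogous space for \(C=0\). The projection \(Y\mapsto(Y_{21},Y_{22})\) has the same kernel in both cases, namely the pairs \((Y_{11},Y_{12})\) with \(AY_{11}=Y_{11}A\) and \(AY_{12}=Y_{12}B\). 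Therefore equality of kernel dimensions gives \(\dim W_C=\dim W_0\).

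Since the defining equations of \(W_0\) are weaker, \(W_C\subseteq W_0\), and so \(W_C=W_0\). In particular \((0,I_n)\in W_0\), hence \((0,I_n)\in W_C\). This means there is \(Y_{12}\) with \(AY_{12}+C=Y_{12}B\), so \(X=-Y_{12}\) solves \(AX-XB=C\). The main care needed is this bookkeeping: checking that the projection kernels coincide, so that equal total dimensions force equal image dimensions.

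For the final statement over \(\mathbb C\), the map \(T(X)=AX-XB\) is linear on \(\mathbb C^{m\times n}\), so unique solvability for every \(C\) is equivalent to \(T\) being injective. If \(\lambda\) is a common eigenvalue, take \(Au=\lambda u\) and \(v^{T}B=\lambda v^{T}\) with \(u,v\neq 0\). Then \(X=uv^{T}\neq0\) and \(T(X)=\lambda uv^{T}-\lambda uv^{T}=0\), so \(T\) is not injective.

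Conversely, suppose \(A\) and \(B\) have no common eigenvalue and \(T(X)=0\). From \(AX=XB\) we get \(p(A)X=Xp(B)\) for every polynomial \(p\). Take \(p\) to be the characteristic polynomial of \(A\). Then \(0=Xp(B)\), and \(p(B)=\prod_i(B-\lambda_iI)\) is invertible because no eigenvalue \(\lambda_i\) of \(A\) is an eigenvalue of \(B\). Hence \(X=0\), and \(T\) is injective.
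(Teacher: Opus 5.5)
Your proof is correct and complete. The paper itself does not prove this theorem: it quotes it from Roth and Zhan and uses it as a black box, so there is no internal argument to compare yours against.

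You supply the standard self-contained route in three parts:
\begin{enumerate}[label=\textup{(\roman*)}]
\item the unipotent intertwiner \(\begin{pmatrix} I_m & X\\ 0 & I_n\end{pmatrix}\) for sufficiency;
\item the Flanders--Wimmer dimension count for necessity: equal dimensions of \(\ker\varphi_C\) and \(\ker\varphi_0\), and the same kernel for the projection onto \((Y_{21},Y_{22})\), hence \(W_C=W_0\ni(0,I_n)\);
\item the rank-one element \(uv^{T}\) together with Cayley--Hamilton for the uniqueness clause.
\end{enumerate}

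The main strength of your route is that the necessity argument uses no canonical forms and no eigenvalues, so it is valid over an arbitrary field \(K\). This is exactly the generality the paper depends on. Both the remark following Proposition~\ref{prop:basic-comparison} and the counterexample section use, over every field, the implication ``\(PM_C=M_0P\) for some invertible \(P\) \(\Rightarrow\) \(AX-XB=C\) is solvable.''

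Two small points you could state explicitly:
\begin{enumerate}[label=\textup{(\roman*)}]
\item Unique solvability for all \(C\) is equivalent to injectivity of \(T\) because \(T\) is an endomorphism of the finite-dimensional space \(\mathbb C^{m\times n}\), so injective forces surjective.
\item The left eigenvector \(v\) exists because \(B\) and \(B^{T}\) have the same eigenvalues.
\end{enumerate}
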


Roth's similarity theorem and its variants have been studied in several settings, including commutative rings and related algebraic contexts; see, for instance, \cite{Gustafson1979,Hartwig1977,WangGao2025}.
A different but closely related formulation is Roth's equivalence theorem for the generalized Sylvester matrix equation
\[
AX-YB=C,
\]
where the two unknowns \(X\) and \(Y\) are independent.
Lin and Wimmer \cite{LinWimmer2011} showed that Roth's equivalence theorem is a special case of a rank-minimization theorem, that is 
\begin{align}\label{eq:LWequivalence}
\min_{X,~Y\in K^{m\times n}}
\rank(AX-YB-C)
=
\min_{P,~Q\in\Gl(m+n,K)}
\rank\left(
P
\begin{pmatrix}
A&C\\
0&B
\end{pmatrix}
-
\begin{pmatrix}
A&0\\
0&B
\end{pmatrix}
Q
\right).
\end{align}
Ito and Wimmer \cite{ItoWimmer2013} later extended this direction to generalized Sylvester equations over B\'ezout domains.
In their note, Lin and Wimmer proposed the following rank-minimization analogue of Roth's similarity theorem.

\begin{conjecture}\label{con:LinWimmer}
Let \(K\) be a field, and let
$
A\in K^{m\times m},~
B\in K^{n\times n},~
C\in K^{m\times n}.
$
Then
\begin{align}\label{eq:LWconjecture}
\min_{X\in K^{m\times n}}
\rank(AX-XB-C)
=
\min_{P\in\Gl(m+n,K)}
\rank\left(
P
\begin{pmatrix}
A&C\\
0&B
\end{pmatrix}
-
\begin{pmatrix}
A&0\\
0&B
\end{pmatrix}
P
\right).
\end{align}
\end{conjecture}

Ferrante and Wimmer \cite{FerranteWimmer2013} subsequently studied this conjecture in detail.
They gave a criterion for equality; see \cite[Lemma~1.4]{FerranteWimmer2013}.
More importantly for the present note, they proved that \eqref{eq:LWconjecture} holds over the complex field \(\mathbb C\) under some spectral hypotheses \cite[Theorem~1.5]{FerranteWimmer2013}.

The purpose of this note is to show that Conjecture \ref{con:LinWimmer} is false without additional spectral hypotheses.
We give an explicit \(3\times 3\) counterexample.
We further prove that, over \(\mathbb C\), this \(3\times3\) size is minimal: if \(\min\{m,n\}<3\), then identity  \eqref{eq:LWconjecture} holds for every \(C\).

\section{Background and quoted results}\label{sec:background}

For clarity, we introduce the notation and the known results that will be used below.  
Set
\[
M_C=\begin{pmatrix}A&C\\0&B\end{pmatrix},
~
M_0=\begin{pmatrix}A&0\\0&B\end{pmatrix}
\]
and define
\[
\alpha(A,B,C)=
\min_{X\in K^{m\times n}} \rank(AX-XB-C),
\]
\[
\gamma(A,B,C)=
\min_{P\in\Gl(m+n,K)} \rank(PM_C-M_0P).
\]
Thus Conjecture~\ref{con:LinWimmer} asserts that
\[
\alpha(A,B,C)=\gamma(A,B,C)
\]
for all triples \((A,B,C)\).

The following elementary comparison is the starting point of the problem.  
It is also in \cite[Lemma~1.4~(i)]{FerranteWimmer2013}.

\begin{proposition}\label{prop:basic-comparison}
For every field \(K\) and every triple \((A,B,C)\),
\[
\gamma(A,B,C)\le \alpha(A,B,C).
\]
Moreover,
\[
\gamma(A,B,C)=0
~\Longleftrightarrow~
\alpha(A,B,C)=0.
\]
\end{proposition}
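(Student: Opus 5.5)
The inequality \(\gamma\le\alpha\) will follow from exhibiting, for each \(X\in K^{m\times n}\), a specific invertible \(P\) whose commutator defect \(PM_C-M_0P\) has the same rank as \(AX-XB-C\). The natural choice is the unipotent block matrix
\[
P_X=\begin{pmatrix} I_m & X\\ 0 & I_n\end{pmatrix}\in\Gl(m+n,K).
\]
A direct block multiplication gives
\[
P_XM_C=\begin{pmatrix} A & C+XB\\ 0 & B\end{pmatrix},\qquad
M_0P_X=\begin{pmatrix} A & AX\\ 0 & B\end{pmatrix},
\]
so that
\[
P_XM_C-M_0P_X=\begin{pmatrix} 0 & C+XB-AX\\ 0 & 0\end{pmatrix},
\]
whose rank is \(\rank(AX-XB-C)\). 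Hence \(\gamma(A,B,C)\le\rank(AX-XB-C)\) for every \(X\). Taking the minimum over \(X\) gives \(\gamma\le\alpha\).

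For the equivalence of the vanishing statements, the direction \(\alpha=0\Rightarrow\gamma=0\) is immediate from the inequality, since ranks are nonnegative. For the converse, \(\gamma=0\) means that some \(P\in\Gl(m+n,K)\) satisfies \(PM_C=M_0P\), i.e. \(M_C\) and \(M_0\) are similar via \(P\). By Roth's similarity theorem (Theorem~\ref{thm:Roth}), this happens exactly when \eqref{eq:Sylvester} has a solution \(X\). That solution satisfies \(\rank(AX-XB-C)=0\), so \(\alpha=0\).

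The only point needing care is that Theorem~\ref{thm:Roth} is stated over an arbitrary field \(K\), which is exactly what the "every field" claim requires. No step is a genuine obstacle: the block computation is routine, and the hard direction of the zero case is entirely carried by Roth's theorem. A self-contained proof of that direction, for instance via the Flanders–Wimmer dimension-counting argument on the solution spaces of \(PM_C=M_0P\), is not needed here, since the theorem is quoted earlier in the paper.
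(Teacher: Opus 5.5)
Your proof is correct and follows essentially the paper's approach. The paper gives no written proof of this proposition: it cites Ferrante--Wimmer for the inequality (your unipotent intertwiner $P_X=\begin{pmatrix} I_m & X\\ 0 & I_n\end{pmatrix}$ is the argument its conclusion alludes to), and its remark settles the zero case by Roth's similarity theorem, just as you do.
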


\begin{remark}
The equivalence of the zero cases follows from Roth's similarity theorem: \(\alpha(A,B,C)=0\) means that the Sylvester matrix equation \(AX-XB=C\) is solvable, and Roth's theorem is precisely the assertion that this is equivalent to the similarity of \(M_C\) and \(M_0\), i.e. to the existence of \(P\in\Gl(m+n,K)\) such that \(PM_C-M_0P=0\).  
This is exactly \(\gamma(A,B,C)=0\).
\end{remark}

We next recall the spectral terminology.  
Let \(T\in\mathbb C^{r\times r}\), and let \(\lambda\) be an eigenvalue of \(T\).  
The eigenvalue \(\lambda\) is called \emph{semisimple} for \(T\) if its geometric multiplicity equals its algebraic multiplicity.  
Equivalently, in the Jordan form of \(T\), every Jordan block corresponding to \(\lambda\) has size \(1\).  
The eigenvalue \(\lambda\) is called \emph{nonderogatory} for \(T\) if its geometric multiplicity is $1$, or, equivalently, if there is exactly one Jordan block corresponding to \(\lambda\).  
For example,
\[
J_1(\lambda)\oplus J_1(\lambda)
\]
is semisimple but not nonderogatory, while
\[
J_2(\lambda)
\]
is nonderogatory but not semisimple.  The smallest Jordan configuration which is neither semisimple nor nonderogatory is
\[
J_2(\lambda)\oplus J_1(\lambda).
\]

Ferrante and Wimmer proved the following sufficient conditions for Conjecture~\ref{con:LinWimmer} to hold.

\begin{theorem}{\rm \cite[Theorem~1.5]{FerranteWimmer2013}}\label{thm:FW-positive}
Let \(A\in\mathbb C^{m\times m}\), \(B\in\mathbb C^{n\times n}\), and \(C\in\mathbb C^{m\times n}\).  Suppose that one of the following two conditions holds:
\begin{enumerate}[label=\textup{(\roman*)}]
\item Each common eigenvalue of \(A\) and \(B\) is nonderogatory for at least one of the two matrices \(A\) and \(B\).
\item Each common eigenvalue of \(A\) and \(B\) is semisimple both for \(A\) and for \(B\).
\end{enumerate}
Then
$
\alpha(A,B,C)=\gamma(A,B,C)
$
for all \(C\in\mathbb C^{m\times n}\).
\end{theorem}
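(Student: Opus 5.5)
The plan is to prove the nontrivial inequality \(\alpha(A,B,C)\le\gamma(A,B,C)\); the reverse inequality is Proposition~\ref{prop:basic-comparison}. Everything rests on two transformations under which both \(\alpha\) and \(\gamma\) are invariant.
\begin{enumerate}[label=\textup{(\alph*)}]
\item \emph{Similarity.} Replace \((A,B,C)\) by \((SAS^{-1},TBT^{-1},SCT^{-1})\). This conjugates \(M_C\) and \(M_0\) by \(S\oplus T\).
\item \emph{Sylvester shift.} Replace \(C\) by \(C+AX_0-X_0B\). Then \(M_C\) is conjugated by \(\begin{pmatrix}I&X_0\\0&I\end{pmatrix}\) while \(M_0\) is fixed, and \(\alpha\) visibly does not change.
\end{enumerate}

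\emph{Step 1: reduce to the common spectrum.} Using (a), bring \(A\) and \(B\) to block form along their generalized eigenspaces, \(A=\bigoplus_\lambda A_\lambda\) and \(B=\bigoplus_\mu B_\mu\), and partition \(C=(C_{\lambda\mu})\) accordingly. For \(\lambda\ne\mu\), Theorem~\ref{thm:Roth} makes \(Y\mapsto A_\lambda Y-YB_\mu\) bijective. So (b) kills every block \(C_{\lambda\mu}\) with \(\lambda\neq\mu\), and we may assume \(C\) lives only on the blocks \((\lambda,\lambda)\) for common eigenvalues \(\lambda\).

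The same bijectivity shows that the off-diagonal blocks of \(AX-XB-C\) can be prescribed freely. I therefore want to compare \(\alpha\) and \(\gamma\) through a common lower-bound invariant rather than through additivity over \(\lambda\). Replacing \(A_\lambda,B_\lambda\) by \(A_\lambda-\lambda I,\ B_\lambda-\lambda I\) changes neither \(AX-XB\) nor \(PM_C-M_0P\) on the relevant blocks. Hence, locally, \(A\) and \(B\) are nilpotent.

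\emph{Step 2: a lower bound for \(\gamma\).} Write \(P=\begin{pmatrix}P_{11}&P_{12}\\P_{21}&P_{22}\end{pmatrix}\). Then
\[
PM_C-M_0P=\begin{pmatrix}P_{11}A-AP_{11}& P_{11}C+P_{12}B-AP_{12}\\ P_{21}A-BP_{21}& P_{21}C+P_{22}B-BP_{22}\end{pmatrix}.
\]
The aim is to show that the rank \(r\) of this matrix lets us produce an \(X\) with \(\rank(AX-XB-C)\le r\). This is essentially the equality criterion of \cite[Lemma~1.4]{FerranteWimmer2013}, and it reduces to controlling the "defects" \(P_{11}A-AP_{11}\) and \(P_{21}A-BP_{21}\).
\begin{itemize}
\item If \(P_{11}\) is invertible and commutes with \(A\), then \(X=-P_{11}^{-1}P_{12}\) gives
\[
\rank(AX-XB-C)=\rank\bigl(P_{11}C+P_{12}B-AP_{12}\bigr)\le r.
\]
\item Symmetrically, the lower row works via \(P_{22}\) when \(P_{21}\) can be neglected.
\end{itemize}
The general case requires a perturbation/genericity argument over \(\mathbb C\). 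I would replace \(P\) by a suitable element of the (Zariski-open) set of matrices achieving rank \(\le r\), chosen close to a "block-triangular" one.

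\emph{Step 3: the two spectral hypotheses (the main obstacle).} This is where the hypotheses enter, and it is the step I expect to be hardest.
\begin{itemize}
\item \textbf{Case (ii).} Locally \(A_\lambda=B_\lambda=0\), so \(AX-XB\) vanishes on those blocks and \(\alpha\) equals the rank of the surviving \(C\)-part. The displayed matrix then contains \(P_{11}C\) and \(P_{21}C\) with no correction terms. Its last block column is
\[
\begin{pmatrix}P_{11}\\P_{21}\end{pmatrix}C
\]
plus terms living in the image of nilpotent/commutator parts. Since \(\begin{pmatrix}P_{11}\\P_{21}\end{pmatrix}\) has full column rank, this should give \(r\ge\rank C\).
\item \textbf{Case (i).} Say \(A_\lambda=J_k(0)\) is nonderogatory; the case where \(B_\lambda\) is nonderogatory follows by transposition. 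Then the commutant of \(A_\lambda\) consists of polynomials in \(J_k\), and the cokernel of \(X\mapsto AX-XB\) has an explicit description. I would first try to show that the minimal rank over the coset \(C+\operatorname{im}(X\mapsto AX-XB)\) is detected by a single linear functional (a "last row" type invariant). I would then show that any \(P\) with \(\rank(PM_C-M_0P)<\alpha\) forces that functional to vanish, a contradiction.
\end{itemize}

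The delicate point in both cases is that the off-diagonal blocks of \(P\) need not respect the commutant structure. I would handle this by averaging \(P\) over the decomposition from Step 1, that is, by projecting onto block-diagonal-in-\(\lambda\) parts, and arguing that this projection does not increase rank in a way that breaks the lower bound.
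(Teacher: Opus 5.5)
The paper does not prove this statement; it quotes it from \cite{FerranteWimmer2013}. Your proposal therefore has to stand on its own, and as written it never establishes the key inequality $\alpha\le\gamma$ in either case.

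\emph{Step~2 is unsound.} For fixed $r$, the set $\{P:\rank(PM_C-M_0P)\le r\}$ is cut out by minors of a matrix depending linearly on $P$. So it is Zariski-\emph{closed} in $\Gl(m+n,\mathbb C)$, not open, and nothing forces it to contain a block-triangular $P$ of the kind your two bullets handle. More decisively, Step~2 uses no spectral hypothesis. Any hypothesis-free way of turning an invertible $P$ into an $X$ with $\rank(AX-XB-C)\le\rank(PM_C-M_0P)$ would contradict the paper's $3\times3$ example. There $P_0$ has singular $P_{11}$ and $P_{21}\neq0$, gives rank $1$, and yet $\alpha=2$.

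\emph{The localization is unjustified.} The shift to nilpotent blocks, and your case~(ii) step ``$\bigl(\begin{smallmatrix}P_{11}\\P_{21}\end{smallmatrix}\bigr)$ has full column rank'', are valid only for an invertible intertwiner of the \emph{local} problem. Compressing a global invertible $P$ to the $\lambda$-generalized eigenspaces of $M_C$ and $M_0$ need not give an invertible matrix. Your ``averaging'' comes with no estimate showing this loss is harmless. Without localization, the terms $P_{12}B-AP_{12}$ and $P_{22}B-BP_{22}$ really can cancel part of $\bigl(\begin{smallmatrix}P_{11}\\P_{21}\end{smallmatrix}\bigr)C$; this is exactly what happens for $P_0$ in the counterexample. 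So ``this should give $r\ge\rank C$'' is precisely where hypothesis~(ii) must enter, and it does not.

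\emph{Case~(i) is only a programme.} The ``single linear functional'' is never produced, and there is no argument that a $P$ of small rank kills it.

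The missing ideas are simpler than any perturbation analysis of $P$.

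\emph{Reduction for $\alpha$.} After your Step~1, the blocks $(\lambda,\mu)$ with $\lambda\neq\mu$ of $AX-XB-C$ can be prescribed freely. Take rank factorizations $U_\lambda V_\lambda^{T}$ of optimal diagonal residuals, pad them with zero columns to a common width, and put $U_\lambda V_\mu^{T}$ in block $(\lambda,\mu)$. This gives $\alpha\le\max_\lambda\alpha(A_\lambda,B_\lambda,C_{\lambda\lambda})$.

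\emph{Case~(i).} Suppose $A_\lambda-\lambda I=J_k(0)$, and set $N=B_\lambda-\lambda I$. Let $x_i,c_i$ be the rows of $X_{\lambda\lambda},C_{\lambda\lambda}$. Choosing $x_1=0$ and $x_{i+1}=x_iN+c_i$ leaves a residual supported on the last row. If instead $B_\lambda$ is nonderogatory, argue with columns. Hence $\alpha\le1$, and equality follows from Proposition~\ref{prop:basic-comparison}, that is, from Roth's theorem.

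\emph{Case~(ii).} Here $A_\lambda=B_\lambda=\lambda I$, so the local value is $\rank C_{\lambda\lambda}$. For any invertible $P$, put $G=P^{-1}M_0P$. The inclusion $\ker(G-\lambda I)\cap\ker(M_C-G)\subseteq\ker(M_C-\lambda I)$ gives, for the reduced $C$,
\[
\rank(PM_C-M_0P)\ge\dim\ker(M_0-\lambda I)-\dim\ker(M_C-\lambda I)=\rank C_{\lambda\lambda}.
\]
This bound is similarity-invariant and needs no localization of $P$. Hence $\gamma\ge\max_\lambda\rank C_{\lambda\lambda}\ge\alpha$.
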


\begin{remark}
When $A$ and $B$ have no common eigenvalue, we have $\alpha(A,B,C)=\gamma(A,B,C)=0$, by Theorem \ref{thm:Roth} and Proposition \ref{prop:basic-comparison}.
This is why only the case of common eigenvalues is considered in \cite{FerranteWimmer2013}.
Therefore, unless otherwise stated, only the case where \(A\) and \(B\) share common eigenvalues is considered in this paper.
\end{remark}

\section{The counterexample}

We consider the following matrices over an arbitrary field \(K\):
\[
A=B=
\begin{pmatrix}
0&1&0\\
0&0&0\\
0&0&0
\end{pmatrix},
~
C=
\begin{pmatrix}
1&0&0\\
0&0&0\\
0&0&1
\end{pmatrix}.
\]
Equivalently,
\[
A=B=J_2(0)\oplus J_1(0).
\]
Thus the only eigenvalue of \(A\) and \(B\) is \(0\).  
Its algebraic multiplicity is \(3\), while its geometric multiplicity is \(2\).  
Hence it is not semisimple.  
It is also not nonderogatory, because there are two Jordan blocks associated with the same eigenvalue \(0\).  
Therefore this example lies outside both alternatives in Theorem~\ref{thm:FW-positive}.

We first compute the left-hand side of \eqref{eq:LWconjecture}.
Let
\[
X=
\begin{pmatrix}
x_{11}&x_{12}&x_{13}\\
x_{21}&x_{22}&x_{23}\\
x_{31}&x_{32}&x_{33}
\end{pmatrix}
\in K^{3\times 3}.
\]
Then
\[
AX-XB-C
=
\begin{pmatrix}
x_{21}-1&x_{22}-x_{11}&x_{23}\\
0&-x_{21}&0\\
0&-x_{31}&-1
\end{pmatrix}.
\]
Denote the three columns of this matrix by \(v_1,v_2,v_3\). 
Then
\[
v_1=
\begin{pmatrix}
x_{21}-1\\
0\\
0
\end{pmatrix},
~
v_2=
\begin{pmatrix}
x_{22}-x_{11}\\
-x_{21}\\
-x_{31}
\end{pmatrix},
~
v_3=
\begin{pmatrix}
x_{23}\\
0\\
-1
\end{pmatrix}.
\]

We claim that
\[
\rank(AX-XB-C)\geq 2
\]
for every \(X\in K^{3\times 3}\).
Indeed, suppose for contradiction that
\[
\rank(AX-XB-C)\leq 1.
\]
Since the third component of \(v_3\) is \(-1\neq 0\), the vector \(v_3\) is nonzero.
Hence every column must be a scalar multiple of \(v_3\).

In particular, \(v_1\) must be a scalar multiple of \(v_3\).
Since \(v_1\) has third component \(0\), whereas \(v_3\) has third component \(-1\neq 0\), this is possible only if \(v_1=0\).
Thus
\[
x_{21}=1.
\]
But then
\[
v_2=
\begin{pmatrix}
x_{22}-x_{11}\\
-1\\
-x_{31}
\end{pmatrix}.
\]
Its second component is \(-1\neq 0\), while every scalar multiple of \(v_3\) has second component \(0\).
This contradicts the assumption that all columns are scalar multiples of \(v_3\).
Therefore
\[
\rank(AX-XB-C)\geq 2
\]
for every \(X\in K^{3\times 3}\).

On the other hand, by taking \(X=0\), we obtain
\[
AX-XB-C=-C,
\]
and
\[
\rank(-C)=\rank(C)=2.
\]
Consequently,
\begin{equation}\label{eq:LHSvalue}
\min_{X\in K^{3\times 3}}
\rank(AX-XB-C)
=
2.
\end{equation}

We now compute the right-hand side of \eqref{eq:LWconjecture}.
Define
\[
P_0=
\begin{pmatrix}
1&0&0&0&0&0\\
0&1&0&1&0&0\\
0&0&0&0&1&0\\
0&0&1&0&0&0\\
0&0&0&0&0&1\\
0&1&0&0&0&0
\end{pmatrix}.
\]
A direct computation gives
\[
\det(P_0)=1,
\]
so \(P_0\in\Gl(6,K)\).
Moreover,
\[
P_0M_C-M_0P_0
=
\begin{pmatrix}
0&0&0&0&0&0\\
0&0&0&0&1&0\\
0&0&0&0&0&0\\
0&0&0&0&0&0\\
0&0&0&0&0&0\\
0&0&0&0&0&0
\end{pmatrix},
\]
which has rank \(1\).
Hence
\begin{equation}\label{eq:RHSatmost1}
\min_{P\in\Gl(6,K)}
\rank(PM_C-M_0P)
\leq 1.
\end{equation}

We next show that the minimum in \eqref{eq:RHSatmost1} is not \(0\).
If it were \(0\), then there would exist \(P\in\Gl(6,K)\) such that
\[
PM_C=M_0P.
\]
Equivalently,
\[
P
\begin{pmatrix}
A&C\\
0&B
\end{pmatrix}
=
\begin{pmatrix}
A&0\\
0&B
\end{pmatrix}
P.
\]
By Roth's similarity theorem, the Sylvester matrix equation
\[
AX-XB=C
\]
would then have a solution \(X\in K^{3\times 3}\).
This would imply
\[
AX-XB-C=0
\]
for some \(X\), contradicting \eqref{eq:LHSvalue}.
Therefore the minimum in \eqref{eq:RHSatmost1} cannot be \(0\).
Combining this with \eqref{eq:RHSatmost1}, we obtain
\begin{equation}\label{eq:RHSvalue}
\min_{P\in\Gl(6,K)}
\rank(PM_C-M_0P)
=
1.
\end{equation}

From \eqref{eq:LHSvalue} and \eqref{eq:RHSvalue}, we have
\[
\min_{X\in K^{3\times 3}}
\rank(AX-XB-C)
=
2
>
1
=
\min_{P\in\Gl(6,K)}
\rank(PM_C-M_0P).
\]
This proves that Conjecture \ref{con:LinWimmer} is false.

\section{Dimensional minimality over the complex field}\label{sec:minimality}

The counterexample in the preceding section has size \(m=n=3\).  
We now show that, over \(\mathbb C\), this size is minimal.

\begin{lemma}\label{lem:scalar-side}
Let \(K\) be any field.  
If either \(A\) or \(B\) is a scalar matrix, then
\[
\alpha(A,B,C)=\gamma(A,B,C).
\]
\end{lemma}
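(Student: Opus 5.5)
Since Proposition~\ref{prop:basic-comparison} already gives \(\gamma(A,B,C)\le\alpha(A,B,C)\), it suffices to prove \(\alpha(A,B,C)\le\gamma(A,B,C)\). The plan is to compute \(\alpha\) explicitly in the scalar case and to bound \(\gamma\) from below by the same quantity using a rank invariant of \(M_C-aI\) versus \(M_0-aI\).

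\textbf{Case \(A=aI_m\).} Put \(N=aI_n-B\), so that \(AX-XB-C=XN-C\). As \(X\) ranges over \(K^{m\times n}\), each row of \(XN\) ranges independently over the row space of \(N\). Hence
\[
\alpha(A,B,C)=\min_X\rank(C-XN)=\rank(CW),
\]
where the columns of \(W\) form a basis of \(\ker N=\ker(B-aI)\).

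To justify this, choose an invertible \(S=(W\ \ W')\) adapted to the decomposition \(K^n=\ker N\oplus(\text{complement})\). Then \(NS=(0\ \ NW')\) with \(NW'\) of full column rank. Consequently \((C-XN)S=(CW\ \ CW'-XNW')\), and the second block can be made zero by a suitable choice of \(X\). This gives the value \(\rank(CW)\), and no choice of \(X\) can do better because the first block \(CW\) does not depend on \(X\).

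For the lower bound on \(\gamma\), fix \(P\in\Gl(m+n,K)\). Since \(P\) commutes with \(aI\), we have
\[
PM_C-M_0P=P(M_C-aI)-(M_0-aI)P .
\]
By subadditivity of rank, \(\rank(Y-Z)\ge|\rank Y-\rank Z|\). Since \(P\) is invertible, this yields
\[
\rank(PM_C-M_0P)\ \ge\ \rank(M_C-aI)-\rank(M_0-aI).
\]
Here \(M_0-aI=0\oplus(B-aI)\) has rank \(\rank(B-aI)\). Also \(M_C-aI=\begin{pmatrix}0&C\\0&B-aI\end{pmatrix}\) has the same rank as the stacked matrix \(\begin{pmatrix}C\\ B-aI\end{pmatrix}\). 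Applying the same change of basis \(S\) on the right shows that this rank equals \(\rank(B-aI)+\rank(CW)\). Hence \(\gamma(A,B,C)\ge\rank(CW)=\alpha(A,B,C)\).

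\textbf{Case \(B=bI_n\).} This is the transpose of the previous argument. Now \(AX-XB-C=(A-bI)X-C\), and
\[
\alpha(A,B,C)=\rank(V^{T}C),
\]
where the rows of \(V^{T}\) span the left kernel of \(A-bI\). On the other side, \(\rank(M_C-bI)=\rank\begin{pmatrix}A-bI & C\end{pmatrix}=\rank(A-bI)+\rank(V^{T}C)\), while \(\rank(M_0-bI)=\rank(A-bI)\). The same subadditivity inequality then gives \(\gamma\ge\alpha\).

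The only step requiring care is the identity \(\rank\begin{pmatrix}C\\N\end{pmatrix}=\rank N+\rank(C|_{\ker N})\), together with the matching formula for \(\alpha\). Both follow from the same adapted basis \(S\), so I expect no genuine obstacle; the whole argument is valid over any field.
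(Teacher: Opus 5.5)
Your proof is correct, and it takes a genuinely different route from the paper's. The paper never evaluates $\alpha$. For $A=\lambda I_m$ it notes that the scalar parts cancel in $PM_C-M_0P$. It then sandwiches $\gamma(A,B,C)\le\alpha(A,B,C)$ between the two sides of the Lin--Wimmer identity \eqref{eq:LWequivalence} for the pair $(0,\,B-\lambda I_n)$. The $P,Q$-side is at most $\gamma$ by taking $Q=P$. The quantity $\alpha$ equals the $X,Y$-side because $X$ drops out of $-Y(B-\lambda I_n)-C$. Since the two outer terms coincide, all the terms are equal.

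You avoid that theorem entirely. You compute $\alpha=\rank(CW)$ directly with an adapted basis. You then bound $\gamma$ from below by $\rank(M_C-aI)-\rank(M_0-aI)$, using only that $P$ commutes with $aI$ and the subadditivity of rank.

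The paper's argument is shorter, but it leans on a nontrivial external result and only says the scalar-$B$ case is ``similar''. Yours is elementary and self-contained, treats both cases explicitly, and identifies the common value of the two minima. Your lower bound $\gamma(A,B,C)\ge\rank(M_C-aI)-\rank(M_0-aI)$ also holds for every triple and every $a\in K$, not only in the scalar case. In the paper's counterexample with $a=0$ it gives exactly $\gamma\ge 1$, which shows $\gamma\neq 0$ there without invoking Roth's theorem.
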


\begin{proof}
We first assume that \(A=\lambda I_m\).  
Then 
$$AX-XB-C
=
\lambda X-XB-C
=
-X(B-\lambda I_n)-C$$
and
\begin{align*}
PM_C-M_0P
=&
P\left(\lambda I_{m+n}+\begin{pmatrix}0&C\\0&B-\lambda I_n\end{pmatrix}\right)
-
\left(\lambda I_{m+n}+\begin{pmatrix}0&0\\0&B-\lambda I_n\end{pmatrix}\right)P\\
=&P\begin{pmatrix}0&C\\0&B-\lambda I_n\end{pmatrix}
-
\begin{pmatrix}0&0\\0&B-\lambda I_n\end{pmatrix}P.
\end{align*}
We have 
\begin{align*}
\min_{P,~Q\in\Gl(m+n,K)}&\rank\left(P\begin{pmatrix}0&C\\0&B-\lambda I_n\end{pmatrix}
-
\begin{pmatrix}0&0\\0&B-\lambda I_n\end{pmatrix}Q\right)\\
&\leq \min_{P \in\Gl(m+n,K)}\rank(PM_C-M_0P)\\
&\leq \min_{X\in K^{m\times n}}\rank(AX-XB-C)\\
&=\min_{X,~Y\in K^{m\times n}}\rank(-Y(B-\lambda I_n)-C).
\end{align*}
We may assume \(A=0\), in which case the result follows from \eqref{eq:LWequivalence}.

The proof for scalar \(B\) is similar.
\end{proof}

\begin{lemma}\label{lem:two-by-two-nonscalar}
Let \(T\in\mathbb C^{2\times2}\).  
If \(T\) is not a scalar matrix, then every eigenvalue of \(T\) is nonderogatory.
\end{lemma}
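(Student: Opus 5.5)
The plan is to reduce the claim to a statement about geometric multiplicity. By definition, an eigenvalue \(\lambda\) of \(T\) is nonderogatory exactly when its geometric multiplicity \(\dim\ker(T-\lambda I_2)\) equals \(1\). So it suffices to show that \(\dim\ker(T-\lambda I_2)=1\) for every eigenvalue \(\lambda\) of \(T\).

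Fix an eigenvalue \(\lambda\in\mathbb C\) of \(T\); one exists since \(\mathbb C\) is algebraically closed. Because \(\lambda\) is an eigenvalue, \(\ker(T-\lambda I_2)\neq 0\), so its dimension is \(1\) or \(2\). If the dimension were \(2\), the kernel would be all of \(\mathbb C^2\). Then \(T-\lambda I_2=0\), that is, \(T=\lambda I_2\), contradicting the hypothesis that \(T\) is not scalar. Hence the geometric multiplicity of \(\lambda\) is exactly \(1\), and \(\lambda\) is nonderogatory.

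Alternatively, one can argue in Jordan form. A non-scalar \(2\times 2\) complex matrix is similar either to \(J_2(\lambda)\) or to \(\operatorname{diag}(\lambda_1,\lambda_2)\) with \(\lambda_1\neq\lambda_2\), since \(\operatorname{diag}(\lambda,\lambda)\) is scalar. In either case, each eigenvalue carries exactly one Jordan block.

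There is no real obstacle. The only point to state explicitly is that a two-dimensional eigenspace in \(\mathbb C^2\) forces \(T\) to be scalar. This is what will feed, together with Lemma~\ref{lem:scalar-side} and Theorem~\ref{thm:FW-positive}(i), into the minimality argument when \(\min\{m,n\}\le 2\).
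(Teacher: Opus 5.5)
Your proof is correct and uses the same key fact as the paper: a geometric multiplicity of $2$ in dimension $2$ forces $T=\lambda I_2$. The paper gets there by splitting on algebraic multiplicity and listing the Jordan forms $J_1(\lambda)\oplus J_1(\lambda)$ and $J_2(\lambda)$, which is your second route; your direct kernel-dimension argument avoids the case split and Jordan form, and works verbatim over any field.
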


\begin{proof}
Let \(\lambda\) be an eigenvalue of \(T\).  
If \(\lambda\) has algebraic multiplicity one, then its geometric multiplicity is $1$, so \(\lambda\) is nonderogatory.  
If \(\lambda\) has algebraic multiplicity two, then the Jordan form at \(\lambda\) is either
\[
J_1(\lambda)\oplus J_1(\lambda)
~\text{or}~
J_2(\lambda).
\]
The first possibility gives \(T=\lambda I_2\), which is excluded.  
Hence the second possibility holds, and again geometric multiplicity one.  
Thus every eigenvalue of \(T\) is nonderogatory.
\end{proof}

\begin{proposition}\label{prop:minimal-dimension}
Over \(\mathbb C\), identity \eqref{eq:LWconjecture} holds whenever
$
\min\{m,n\}<3.
$
\end{proposition}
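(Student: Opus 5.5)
Without loss of generality we may assume \(\min\{m,n\}=m\le 2\); the case \(n\le 2\) is symmetric, because the lemmas invoked below apply equally to \(B\). We split according to the structure of \(A\in\mathbb C^{m\times m}\).

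\emph{Case 1: \(A\) is a scalar matrix.} This covers every \(1\times 1\) matrix, so it settles \(m=1\) completely, and it also covers \(A=\lambda I_2\) when \(m=2\). In this case Lemma~\ref{lem:scalar-side} gives \(\alpha(A,B,C)=\gamma(A,B,C)\) for every \(C\), with no restriction on \(B\) or on the field.

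\emph{Case 2: \(m=2\) and \(A\) is not scalar.} By Lemma~\ref{lem:two-by-two-nonscalar}, every eigenvalue of \(A\) is nonderogatory for \(A\). In particular, every common eigenvalue of \(A\) and \(B\) is nonderogatory for at least one of the two matrices, namely \(A\). Condition (i) of Theorem~\ref{thm:FW-positive} therefore holds, and that theorem yields \(\alpha(A,B,C)=\gamma(A,B,C)\) for all \(C\in\mathbb C^{m\times n}\), whatever \(B\) is. If \(A\) and \(B\) have no common eigenvalue, condition (i) holds vacuously; equivalently, the remark following Theorem~\ref{thm:FW-positive} gives \(\alpha=\gamma=0\) directly.

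\emph{The case \(n<3\).} Swap the roles of \(A\) and \(B\): if \(B\) is scalar, apply Lemma~\ref{lem:scalar-side}; otherwise \(n=2\), and Lemma~\ref{lem:two-by-two-nonscalar} applied to \(B\) verifies condition (i) of Theorem~\ref{thm:FW-positive}, since each common eigenvalue is now nonderogatory for \(B\).

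No step is a real obstacle, because the heavy lifting is done by the quoted Theorem~\ref{thm:FW-positive} and by Lemma~\ref{lem:scalar-side}. The only points to check are that the case split is exhaustive (every \(1\times1\) matrix is scalar, and every \(2\times2\) matrix is either scalar or has only nonderogatory eigenvalues) and that Lemma~\ref{lem:scalar-side} is genuinely symmetric in \(A\) and \(B\). The proof of that lemma treats the scalar-\(B\) case only by saying ``similar'', so I would verify that case explicitly. The restriction to \(\mathbb C\) enters only through Lemma~\ref{lem:two-by-two-nonscalar} (Jordan forms) and Theorem~\ref{thm:FW-positive}.
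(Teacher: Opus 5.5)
Your proof is correct and follows the paper's argument exactly. The scalar case goes through Lemma~\ref{lem:scalar-side}, which covers $m=1$; the non-scalar $2\times 2$ case goes through Lemma~\ref{lem:two-by-two-nonscalar} and condition (i) of Theorem~\ref{thm:FW-positive}; and $n<3$ follows by symmetry. The scalar-$B$ case of Lemma~\ref{lem:scalar-side} that you wanted to check does go through: with $B=\mu I_n$, the same sandwich argument applies to the triple $(A-\mu I_m,0,C)$ via \eqref{eq:LWequivalence}.
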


\begin{proof}
Suppose first that \(m<3\).  
If \(m=1\), then \(A\) is scalar, and Lemma~\ref{lem:scalar-side} applies.  

Now suppose \(m=2\).  
If \(A\) is scalar, Lemma~\ref{lem:scalar-side} again applies. 
If \(A\) is not scalar, Lemma~\ref{lem:two-by-two-nonscalar} shows that every eigenvalue of \(A\) is nonderogatory.  
Therefore every common eigenvalue of \(A\) and \(B\) is nonderogatory for at least one of the two matrices, namely for \(A\).  
By Theorem \ref{thm:FW-positive}, identity \eqref{eq:LWconjecture} holds. 

The case \(n<3\) is similar.
\end{proof}

\begin{remark}\label{rem:FW-not-necessary}
Theorem \ref{thm:FW-positive} gives sufficient conditions, not necessary ones.  
Lemma~\ref{lem:scalar-side} shows that equality holds whenever one of \(A,B\) is scalar, even though the nonderogatory hypothesis in Theorem \ref{thm:FW-positive} may fail and the semisimple-both-sides hypothesis in Theorem \ref{thm:FW-positive} may also fail.  
Thus the logical role of the result of Ferrante and Wimmer \cite{FerranteWimmer2013} is to provide broad spectral classes where equality is guaranteed, while the present counterexample shows that equality may fail outside those classes.
\end{remark}

\section{Conclusion}

We have exhibited a \(3\times 3\) counterexample over an arbitrary field to the rank-minimization conjecture of Lin and Wimmer for Roth's similarity theorem.
The example also indicates that the freedom to choose an arbitrary invertible matrix \(P\in\Gl(m+n,K)\) can reduce the rank below what can be attained by block upper triangular intertwiners of the form
\[
\begin{pmatrix}
I_m&X\\
0&I_n
\end{pmatrix}.
\]
By Proposition~\ref{prop:minimal-dimension}, over \(\mathbb C\) this phenomenon cannot occur when one of the two matrix sizes is less than \(3\).
Therefore the present \(3\times3\) construction is dimensionally minimal over the complex field.
A natural remaining problem is to characterize more precisely when the basic inequality \(\gamma(A,B,C)\le \alpha(A,B,C)\) from Proposition~\ref{prop:basic-comparison} is strict.

~\\

\noindent{\bf Acknowledgement.} The work was supported by the National Science Foundation of Anhui Higher Education Institutions of China (KJ2021ZD0058).

\end{document}